\let\oldhypertarget\hypertarget
\renewcommand{\hypertarget}[2]{%
  \oldhypertarget{#1}{#2}%
    \protected@write\@mainaux{}{%
        \string\expandafter\string\gdef
          \string\csname\string\detokenize{#1}\string\endcsname{#2}%
    }%
  }
\newcommand{\myhyperlink}[1]{%
  \hyperlink{#1}{\csname #1\endcsname}%
  }
\newcommand{\R}{\mathbb{R}}
\renewcommand{\S}{\mathbb{S}}
\newcommand{\s}{u}
\newtheorem{thm}{Theorem}[section]
\newtheorem{lem}[thm]{Lemma}
\newtheorem{prop}[thm]{Proposition}
\newtheorem{rmk}[thm]{Remark}
\title{A curvature flow that deforms curves to an embedded target}
\author{Samuel Cuthbertson\and Glen Wheeler\and Valentina Wheeler}
\begin{document}

\begin{abstract}
In this paper we introduce the \emph{target flow}---a specific curve shortening flow with an ambient forcing term---that, given an embedded (not necessarily convex) target curve $\eta:\S\to\R^2$, will attempt to evolve a given source curve to that target.
The motivation for this flow is to address a question of Yau.
Our main result is that the target flow with uniformly normal graphical data converges smoothly to the target, broadening the class of known sources and targets such that Yau's problem has a solution.
\end{abstract}

\maketitle

\section{Introduction}

Let $\eta:\S^1\rightarrow \R^2$ be an embedded smooth curve. 
Our main object of study is the curvature flow
\begin{equation}
\label{EQflow}
\partial_t \gamma
 = \kappa + V^\eta(\gamma)
\end{equation}
where $\gamma:\S\times[0,T)\rightarrow\R^2$ is a one-parameter family of evolving curves, $\kappa = k\nu$ is the curvature vector, and $V^\eta:\R^2\to\R^2$ is defined as follows.
In the maximal uniform tubular neighbourhood $T^\eta\subset\R^2$ of $\eta(\S)$, with normal coordinates $(\s,r)\mapsto (\eta(\s), \eta(\s)+r\nu^\eta(\s))$ (where $\s$ and $\nu^\eta$ are the arc-length parameter of $\eta$ and the unit normal to $\eta$ respectively), we define
\[
V^\eta(\eta(\s)+r\nu^\eta(\s))
 := 
	\left(-
		\frac{k^\eta}{1-k^\eta r}-C(\eta)r
	\right)\nu^{\eta}(\s)
\,.
\]
We call $\gamma_0(\cdot) = \gamma(\cdot,0)$ the \emph{source curve} and $\eta$ the \emph{target curve}.
The constant $C(\eta)$ depends on the target only and is determined in Section \ref{Ssmoothconv}, specifically Propositions \ref{pro: rs} and \ref{PRrssunif}.

The flow \eqref{EQflow} will not move a (normal graphical) source curve from inside $T^\eta$ to the outside $\R^2\setminus T^\eta$, and so the definition of $V^\eta$ on $\R^2\setminus T^\eta$ is not important.
We set $V^\eta(x,y) = 0$ for $(x,y)\in\R^2\setminus T^\eta$.

The flow \eqref{EQflow} is stationary if the evolving curve $\gamma$ is equal to the target $\eta$.
Furthermore, the flow moves to match the curvature scalars and position vectors of $\gamma(\cdot,t)$ and $\eta$.
Our main result is that the flow \eqref{EQflow} drives all initial data $\gamma_0$ that is normal graphical over the target $\eta$ to $\eta$ smoothly in infinite time.

\begin{thm}\label{TMmain}
Let $\eta:\S\rightarrow \R^2$ be an embedded smooth curve, and $\gamma_0:\S\to\R^2$ a curve with $\gamma_0(\S)\subset T^\eta$, that is, normal graphical over $\eta$.
Assume further that $r_0\in h^{2+\alpha}([0,L(\eta)])$, that is, the initial graph function $r_0$ is in the $(2+\alpha)$ little H\"older space.

Then, the solution $\gamma:\S\times[0,T)\to\R^2$ to the target flow \eqref{EQflow} exists uniquely and has the following properties:
\begin{enumerate}[label=(\roman*)]
\item $\gamma(\cdot,t)$ remains normal graphical,
\item is smooth (or analytic if $\eta$ is analytic),
\item the maximal time of existence is infinite, and
\item $\gamma(\cdot,t)$ converges exponentially fast in $C^\infty$ to $\eta$.
\end{enumerate}
\end{thm}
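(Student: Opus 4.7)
The strategy is to reduce the geometric evolution to a scalar quasilinear parabolic equation for the normal graph height $r$, establish short-time existence in little H\"older spaces, and leverage the dissipative term $-C(\eta)r$ to obtain global existence and exponential decay. Writing $\gamma(u,t) = \eta(u) + r(u,t)\nu^\eta(u)$ in the normal coordinates on $T^\eta$ with $u$ the arc-length of $\eta$, and taking the $\nu^\eta$-component of the flow equation (modulo a tangential reparameterization), one obtains a quasilinear parabolic PDE
\[
r_t = a(u,r,r_u)\,r_{uu} + b(u,r,r_u) - C(\eta)\,r,
\]
with $a > 0$ inside $T^\eta$. The crucial point is that the singular factor $-k^\eta/(1-k^\eta r)$ in $V^\eta$ exactly cancels the analogous geometric term appearing in the $\nu^\eta$-component of the graph's curvature $\kappa$, so $r\equiv 0$ is stationary and the zero-order reaction at $r=0$ reduces to $-C(\eta)r + O(r^2)$.

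The hypothesis $r_0\in h^{2+\alpha}$ supplies exactly the density of smooth functions required by the quasilinear parabolic theory of Angenent and Lunardi on closed manifolds; this yields a unique maximal solution $r \in C([0, T_{\max}); h^{2+\alpha}(\S)) \cap C^1((0, T_{\max}); h^\alpha(\S))$. Instantaneous smoothing follows from the analyticity of the associated semigroup combined with parabolic Schauder bootstrapping, which gives property (ii); when $\eta$ is analytic, Angenent's parameter-trick delivers joint space-time analyticity.

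For global existence and convergence, apply the parabolic maximum principle to the scalar $r$-equation. With $C(\eta)$ chosen in Propositions \ref{pro: rs} and \ref{PRrssunif} large enough that $-C(\eta)r$ dominates $b(u,r,r_u)$ uniformly on the admissible range of $r$, the maximum principle forces $\|r(\cdot,t)\|_{L^\infty}$ to decay exponentially and $r$ to remain strictly inside $T^\eta$. This preserves graphicality and uniform parabolicity (giving (i)), so $T_{\max} = \infty$ (giving (iii)); repeated Schauder/interpolation estimates then upgrade the exponential decay from $L^\infty$ to every $C^k$-norm, proving (iv).

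The main obstacle is the choice of $C(\eta)$: it must be large enough for the dissipative estimate to hold for \emph{all} normal-graphical initial data, not only small perturbations of $\eta$. The explicit geometric control from Propositions \ref{pro: rs} and \ref{PRrssunif} is exactly what delivers this uniform dominance over the nonlinear reaction, and it is here that embeddedness of $\eta$---which guarantees the uniform tubular neighborhood $T^\eta$---enters quantitatively.
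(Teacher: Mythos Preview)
Your proposal is correct and follows the paper's strategy: reduce to the scalar quasilinear equation for $r$, invoke Angenent's little-H\"older theory for local existence, uniqueness and smoothing, and combine maximum-principle decay with Schauder bootstrapping for convergence. Two imprecisions are worth flagging, though neither is fatal.

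First, the exponential decay of $\|r\|_{L^\infty}$ (Lemma~\ref{LMrbd}) does \emph{not} rely on $C(\eta)$ being large: it comes from comparison with the explicit constant-in-space barriers $r_ie^{-Ct}$, and at a spatial extremum the lower-order term $b(u,r,r_u)=r_u(kr)_u/(v(1-kr))$ vanishes automatically because it carries a factor $r_u$. The largeness of $C(\eta)$ enters only in Propositions~\ref{pro: rs} and~\ref{PRrssunif}, where it is used to make $-C$ dominate the zeroth-order coefficients in the evolution of $r_u^2$ and $r_{uu}^2$.

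Second, global existence requires a gradient bound to keep the leading coefficient $a=1/((1-kr)^2+r_u^2)$ uniformly positive, and the paper does this in two stages: Lemma~\ref{LMrsbd} gives a bound growing like $e^{c_1t/2}$, which already suffices to verify Angenent's hypotheses $(F_1)$--$(F_4)$ on any finite interval and hence obtain $T_{\max}=\infty$; only \emph{after} a waiting time $T_{\varepsilon_0}$, once $|r|$ is small, does Proposition~\ref{pro: rs} upgrade this to exponential decay of $r_u$. Your sketch compresses this two-stage structure into a single maximum-principle step, but the mechanism you describe is the right one.
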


\begin{figure}[ht]
\centering
\begin{minipage}{0.45\textwidth}
    \centering
    \includegraphics[width=\textwidth]{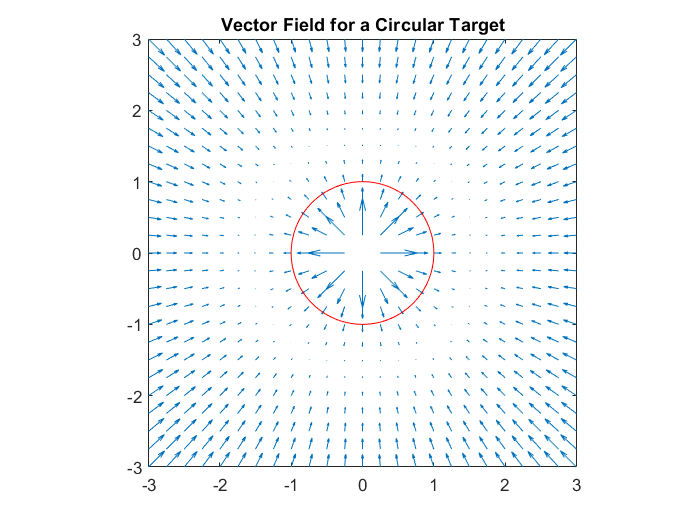}
\end{minipage}
\hspace{-0.05\textwidth}
\begin{minipage}{0.45\textwidth}
    \centering
    \includegraphics[width=\textwidth]{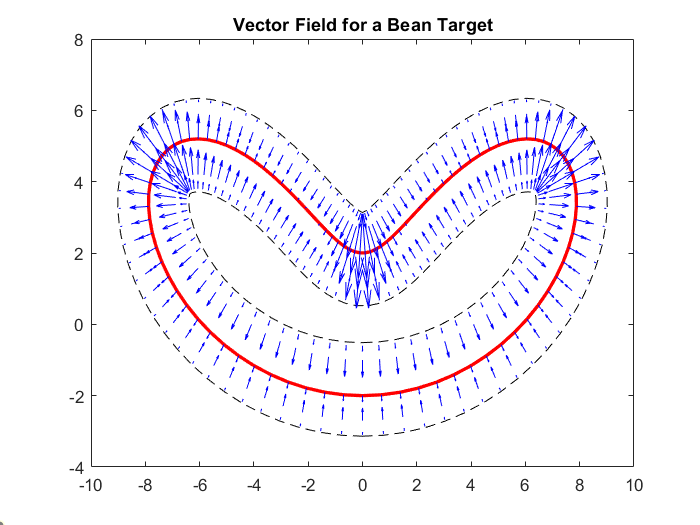}
\end{minipage}
\caption{
(Left) A plot of the vector field $V^\eta$ for a circle of radius $1$.
Arrows have been scaled by $1.5$ for visual effect.
(Right) A plot of the vector field $V^\eta$ in the tubular neighbourhood of a bean parametrised by $\eta(u) = (x(u),y(u))$ with $x(u) = 7\sin(u) + 2\sin(2u)$, and $y(u) = 4 - 2\cos(u) - 4\cos(u)^2 + \sin^2(u)$.
Note that (1) on the target, the vector field is $-k^\eta\nu^\eta$, and (2) some vectors are very large (especially near points of high curvature) on one side, which can obscure the smaller vectors.}
\end{figure}

\begin{rmk}
The little H\"older space $h^{2+\alpha}$ is the closure of smooth functions in the standard $C^{2+\alpha}$ norm.
It is a subset of $C^{2+\alpha}$, and has improved compactness and approximation properties over the standard space.
We are using it here as it is a natural setting in which to apply Angenent's linearisation approach to obtain existence and uniqueness (see Section \ref{Sexistuniq}).
The atypical degeneracy of the target flow is the primary technical obstacle preventing us from weakening this assumption.
\end{rmk}

Our main motivation for studying this flow is to resolve Yau's problem for
broader classes of source and target curves.
Yau's problem asks if, given a target curve $\eta$, it is possible to find a
parabolic flow that evolves as many source curves as possible to the target.
Our flow addresses this directly as the velocity of \eqref{EQflow} depends only on $\eta$.
Thus, Theorem \ref{TMmain} implies that Yau's problem is solvable for embedded smooth target curves, and normal graphical source curves of regularity class $h^{2+\alpha}$.

There are a number of recent results on Yau's problem, which we now briefly survey.
Lin and Tsai considered \cite{lin2009evolving} the flow
\begin{align*}
\gamma_t = -\left(\frac{1}{k}-\frac{1}{k^\eta}\right)\nu\,,
\end{align*}
in the angle parametrisation $(\theta,t)$.
Their analysis requires convexity and that the initial curve and the target
have the same length.
The advantage of this flow is that the difference of inverse curvature,
$z(\theta,t) = \frac{1}{k^\gamma(\theta,t)}-\frac{1}{k^\eta(\theta,t)} $
satisfies the linear equation
\[
z_t = z_{\theta \theta}+z\,.
\]
This makes the convergence argument more straightforward, and the authors are
able to show that convex source curves converge to convex targets.

Gage-Li \cite{gage1993evolving,gage1994evolving} studied convex curves
evolving under the equation
\[
\gamma_t = f(\theta)k\nu
\,.
\]
They showed that if the function $f$ took the form
\[
f(\theta) = \frac{-h^\eta(\theta)}{k^\eta(\theta)}
\]
where $h^\eta = \langle \eta, (\cos(\theta),\sin(\theta))\rangle,$ the support
function of the target curve of some smooth, symmetric convex curve $\eta$,
then the flow converges in shape to $\eta$ as it shrinks to a point.

In an important contribution, Chou-Zhu \cite{chou1999anisotropic} studied the anisotropic flow of convex curves:
\[
\beta(\theta)\gamma_t = (g(\theta)k+F)\nu\,.
\]
Here both $\beta$ and $g$ are positive and $F$ is a constant.
They proved that there exists a negative $F^*$ such that the curve converges in shape to a solution of
\[
g(\theta)k(\theta) 
 + F^*+\langle c^*,(\cos(\theta),\sin(\theta))\rangle \beta(\theta) = 0
\,,
\]
where $c^* = (c_1^*,c_2^*)$ is the unique point satisfying
\[
\int_0^{2\pi}\frac{g(\theta)e^{i\theta}d\theta}{\beta(\theta)(c_1^*\cos(\theta)+c_2^*\sin(\theta))+F} = 0
\,.
\]
Gao-Zhang \cite{gao2019yau} suggested the flow
\begin{equation}
\gamma_t = \left( k-\lambda(t)k^\eta \right)\nu
\,.
\end{equation}
They choose
\[
\lambda(t) = \frac{2\pi}{\int_{\mathbb{S}^1}k^\eta|\gamma_u|du}
\,.
\]
The quantity $\lambda$ ensures that the evolving curve has constant area. 
Their analysis uses convexity as well, and they proved that the flow converges
to a curve congruent to $\sqrt{\frac{A[\gamma_0]}{A[\eta]}}\eta$.

The main advantage of our flow \eqref{EQflow} over the existing literature is
that we allow for (1) non-convex source curves, (2) non-convex target curves, and (3) we do not require any action that depends on the initial curve in order for the position vector of the source to converge to the target (for example, a translation of the final image).
The disadvantages with our approach are that (a) we do require relatively high regularity of the source in order to overcome the degeneracy of the flow and obtain uniqueness of the solution, and (b) we require the normal graphical condition on the source curve.

We prove Theorem \ref{TMmain} in three main steps.
In Section \ref{Spre}, we set our notation and derive the evolution equations for the graph function.
First, in Section \ref{Spre} we introduce our notation, setting, and derive the graph flow.
Then, in Section \ref{Sapriori}, we establish the existence of appropriate barriers and prove a-priori estimates using maximum principle arguments. 
Second, in Section \ref{Sexistuniq}, we prove global existence and uniqueness, which is non-standard due to the strong degeneracy in the speed of the flow once written in graphical coordinates; to deal with this, we use ideas due to Angenent. 
Third, in Section \ref{Ssmoothconv}, we prove explicit decay estimates for the normal distance to the target curve, in order to conclude smooth convergence.

\section{Preliminaries}
\label{Spre}

\subsection{Evolution equation for the graph function with arbitrary \texorpdfstring{$V$}{V}}

Let $\eta:[0,L^\eta]\rightarrow \R^2$ be a given embedded curve parametrised by arclength $\s$.
We work only with curves $\gamma$ that can be parametrised by
\begin{equation}\label{EQnormgraph}
\gamma(\s) = \eta(\s) + r(\s)\nu^\eta
\,,
\end{equation}
where $r:[0,L^\eta)\rightarrow \R^2$.
Such a curve will be referred to as \emph{normal graphical} and the function $r$ is the \emph{graph function}.
This is a standing assumption throughout the paper (we will preserve normal graphicality in Section \ref{Sapriori}), and so when we consider solutions to \eqref{EQflow}, we are assuming also that these solutions can be parametrised according to \eqref{EQnormgraph}.

It is important to note that, as we use the inward-pointing unit normal $\nu^\eta$ in \eqref{EQnormgraph}, \emph{positive} values of $r$ correspond to the curve $\gamma$ being inside $\eta$, and \emph{negative} values of $r$ correspond to $\gamma$ being outisde $\eta$.
If $\eta$ is convex, the uniform tubular neighbourhood in which a curve can be normal-graphical is infinite on one side.
Set
\[
I^\eta =
\begin{cases}
\left(-\infty, \frac{1}{k_{\max}}\right) & \text{ if }\eta \text{ is convex }\\
\left(\frac{1}{k_{\min}}, \frac{1}{k_{\max}}\right) & \text{ if }\eta \text{ is  nonconvex. }
\end{cases}
\]
The maximal uniform tubular neighbourhood of $\eta(\S)$ in $\R^2$, called earlier $T^\eta$, is the following strip around $\eta$:
\[
T^\eta = \{ p\in\R^2\,:\, p = \eta(\s) + r\nu^\eta(\s)\,,\text{ for some $\s\in[0,L^\eta]$, $r\in I^\eta$}\}
\,.
\]
There is a map between functions $r:[0,L^\eta]\to\R$ and curves $\gamma^r:[0,L^\eta]\to\R^2$.
Given a function $r$ such that $r(\s)\in I^\eta$ for all $\s$, the curve $\gamma^r$ \emph{generated by $r$} is that given by the formula \eqref{EQnormgraph}, that is,
\[
\gamma^r(\s) = \eta(\s) + r(\s)\nu^\eta
\,.
\]

Let us now consider a one-parameter family of curves $\gamma:[0,L^\eta]\times[0,T)\to\R^2$, all of which are normal graphical (satisfying \eqref{EQnormgraph} with $r = r(\s,t)$).
This family evolves according to the target flow \eqref{EQflow} if and only if the function $r$ evolves according to a specific non-linear parabolic PDE, which we shall now derive.

Differentiating \eqref{EQnormgraph} gives
\begin{align*}
\gamma_\s &= \tau^\eta+r_\s\nu^\eta-k^\eta r\tau^\eta
 = (1-k^\eta r)\tau^\eta+r_\s\nu^\eta
\,,\text{ so }
\\
|{\gamma}_\s| &= \sqrt{(1-k^\eta r)^2+(r_\s)^2}\,.
\intertext{Thus the tangent and normal vectors along $\gamma$ are given by}
{\tau} &= \frac{(1-k^\eta r)\tau^\eta + r_\s\nu^\eta}{ \sqrt{(1-k^\eta r)^2+(r_\s)^2}}
\qquad\text{and}\qquad
{\nu} = \frac{(1-k^\eta r)\nu^\eta-r_\s\tau^\eta}{ \sqrt{(1-k^\eta r)^2+(r_\s)^2}}\,.
\intertext{Continuing to the curvature of $\gamma$, we calculate}
{\gamma}_{\s\s} &= k^\eta (1-k^\eta r)\nu^\eta - (k^\eta)_\s r\tau^\eta -2k^\eta r_\s\tau^\eta+r_{\s\s}\nu^\eta
\\
&= (r_{\s\s}+k^\eta (1-k^\eta r))\nu^\eta-((k^\eta)_\s r+2k^\eta r_\s)\tau^\eta
\,,\text{ so }
\\
{k} &= \frac{(1-k^\eta r)(r_{\s\s}+k^\eta (1-k^\eta r))+r_\s((k^\eta)_\s r+2k^\eta r_\s)}{((1-k^\eta r)^2+(r_\s)^2)^{\frac{3}{2}}}
\\
&= \frac{(1-k^\eta r)r_{\s\s}+k^\eta\left( (1-k^\eta r)^2+(r_\s)^2\right) +r_\s(k^\eta r)_\s}{((1-k^\eta r)^2+(r_\s)^2)^{\frac{3}{2}}}
\\
&= \frac{(1-k^\eta r)r_{\s\s}+ k^\eta r_\s^2 + r (k^\eta)_\s}{((1-k^\eta r)^2+(r_\s)^2)^{\frac{3}{2}}}
	+ \frac{k^\eta}{((1-k^\eta r)^2+(r_\s)^2)^{\frac{1}{2}}}
\,.
\end{align*}
Note that also
\begin{equation}
\label{EQcosanglenormals}
\nu^\eta\cdot\nu = \frac{1-k^\eta r}{((1-k^\eta r)^2+(r_\s)^2)^{\frac{1}{2}}}
\,.
\end{equation}
Thus $\gamma$ is a curve shortening flow with ambient force field $V$, $\gamma_t = \kappa + V(\gamma)$, if and only if
\begin{align}
r_t
	&= \frac1{\nu^\eta\cdot\nu}\left(
		k + V(\gamma)\cdot\nu
	\right)
\notag\\
	&= \frac{((1-k^\eta r)^2+(r_\s)^2)^{\frac{1}{2}}}{1-k^\eta r}\left(
   		\frac{(1-k^\eta r)r_{\s\s}+ k^\eta r_\s^2 + r (k^\eta)_\s}{((1-k^\eta r)^2+(r_\s)^2)^{\frac{3}{2}}}
		+ \frac{k^\eta}{((1-k^\eta r)^2+(r_\s)^2)^{\frac{1}{2}}}
		+ V(\gamma)\cdot\nu
	\right)
\notag\\
	&= \frac{((1-k^\eta r)^2+(r_\s)^2)^{\frac{1}{2}}}{1-k^\eta r}\left(
   		\frac{(1-k^\eta r)r_{\s\s}+ k^\eta r_\s^2 + r (k^\eta)_\s}{((1-k^\eta r)^2+(r_\s)^2)^{\frac{3}{2}}}
		+ \frac{k^\eta}{((1-k^\eta r)^2+(r_\s)^2)^{\frac{1}{2}}}
		+ V(\gamma)\cdot\nu
	\right)
\notag\\
&= 
	\frac{r_{\s\s}}{(1-k^\eta r)^2+(r_\s)^2}
	+ \frac{k^\eta r_\s^2 + (k^\eta)_\s r}{((1-k^\eta r)^2+(r_\s)^2)(1-k^\eta r)}
	+ \frac{k^\eta}{1-k^\eta r} 
\notag\\
&\hspace{5cm}
	+ \frac{\sqrt{(1-k^\eta r)^2+(r_\s)^2}}{(1-k^\eta r)} V(\eta+r\nu^\eta)\cdot \nu
\label{EQrflow}
\end{align}
for $(\s,t)\in [0, L(\eta))\times(0,T)$, with initial condition $r(\s,0) = r_0$ for $\s\in[0,L(\eta))$.

\subsection{Derivation of \texorpdfstring{$V$}{V}}
\label{SderivV}

We will define $V$ to be parallel to normal rays of $\eta$. In particular, for some function $f(r,k^\eta)$ we set
\[
V(\eta+r\nu^\eta) = f(r,k^\eta)\nu^\eta.
\]
Observe that we must have $f(0,k^\eta) = -k^\eta$ so that $V(\eta) = -k^\eta\nu^\eta$.
This ensures that the flow stops once the solution curve agrees with that of the target.

Let $r$ be a solution to the flow equation \eqref{EQflow} that is constant in space, that is, $r(\s,t) = \hat r(t)$.
Then (recall \eqref{EQcosanglenormals}) $\nu^\eta\cdot\nu = 1$ and
\[
\hat r_t = \frac{k^\eta}{1-k^\eta \hat r} + f(\hat r,k^\eta).
\]
We wish to have exponential decay of $\hat r$, which is ensured by a differential equation of the form $\hat r_t = -C\hat r$ where $C$ is some constant.

We guarantee that this holds by choosing, in general,
\[
f(r,k^\eta) = -Cr-\frac{k^\eta}{1-k^\eta r}
\,.
\]
We leave the constant $C>0$ as a parameter for now (it will be set to a constant depending only on $\eta$ in Section 5).

With this choice, we have at our disposal solutions $\hat r(t) = \hat r(0) e^{-Ct}$, which will be very useful in our analysis.

  \subsection{Evolution equation for the graph function of the target flow}

Incorporating the definition of $V$ given above, we find that $\gamma(\s,t)$ is a target flow \eqref{EQflow} if and only if 
\begin{equation}\label{PDE r}
r_t
 = \frac{r_{\s\s}}{(1-k^\eta r)^2+(r_\s)^2} 
	+ \frac{r_\s(k^\eta r)_\s}{((1-k^\eta r)^2+(r_\s)^2)(1-k^\eta r)}
	- Cr
\,,
\quad
\text{$r(\cdot,t)$ periodic}\,,
\quad
r(\cdot,0) \in I^\eta\,.
\end{equation}
Note that if the initial data $r(\cdot,0)$ does not have values contained in $I^\eta$, then the curve generated by $r(\cdot,0)$ will not be contained in the uniform tubular neighbourhood $T^\eta$.
We do not consider such initial data.

We assume that $r_0\in h^{2+\alpha}([0,L^\eta])$.
For each fixed $t$, the solution curve $\gamma(\cdot,t)$ generated by $r(\cdot,t)$ belongs to the space
\[
X^\eta  = \{ \gamma \in h^{2+\alpha}(\S^1,\R^2)\,:\, \gamma(\S^1) \subset \overline{T^\eta} \text{ and is normal graphical}\}.
\]
In fact, much more than this is true: The solution will be smooth (or analytic) for $t>0$.
This uses smoothness of $\eta$ (or analyticity).

From here on we will remove the $\eta$ superscript unless needed to avoid confusion. 
For convenience we use the abbreviation $v = (1-kr)^2+(r_\s)^2$.


\section{A-priori estimates}
\label{Sapriori}

First, we take advantage of the barriers we mentioned earlier.

\begin{lem}(Comparison with barriers)
Suppose there are constants $r_1\leq 0\le r_2$ such that  $r_1<r(\s,0)< r_2$ for all $\s \in [0,L(\eta))$, and that $r_1, r_2, r(\cdot,t) \in I$.
Then any solution to \eqref{PDE r} with initial data $r(\cdot,0)$ satisfies
\[
 r_1e^{-Ct}
 < r(\s,t)
 < r_2e^{-Ct}
\,.
\]
\label{LMrbd}
\end{lem}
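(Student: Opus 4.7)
The observation driving the proof, already highlighted in Section~\ref{SderivV}, is that the spatially-constant functions $\bar{r}(t) := r_2 e^{-Ct}$ and $\underline{r}(t) := r_1 e^{-Ct}$ are themselves solutions of \eqref{PDE r}: substituting a function of $t$ alone into the right-hand side kills both the $r_{\s\s}$ and $r_\s$ terms, reducing the PDE to the ODE $\hat{r}_t = -C\hat{r}$ that was used to motivate the definition of $V$. My plan is therefore to use $\bar{r}$ and $\underline{r}$ as upper and lower barriers in a parabolic comparison argument.

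Setting $w(\s,t) := \bar{r}(t) - r(\s,t)$ and substituting $r = \bar{r} - w$, $r_\s = -w_\s$, $r_{\s\s} = -w_{\s\s}$ into \eqref{PDE r}, then subtracting $\bar{r}_t = -C\bar{r}$, one obtains a quasilinear parabolic equation of the form
\begin{equation*}
w_t = \frac{w_{\s\s}}{v} + B(\s,t,w,w_\s)\,w_\s - C w,
\end{equation*}
whose leading coefficient $1/v$ is bounded away from $0$ because the standing hypothesis $r(\cdot,t)\in I$ keeps $1-kr$ (and hence $v$) uniformly positive; the coefficient $B$ is a bounded rational expression in $k$, $k_\s$, $\bar{r}$, $w$, $w_\s$. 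The initial data satisfy $w(\cdot,0) > 0$ strictly.

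The rest is a textbook first-touching contradiction. If $w$ ever vanished on $\S\times[0,T)$, pick the minimal such time $t_0 > 0$ and a spatial minimiser $\s_0$; then $w_\s = 0$, $w_{\s\s}\ge 0$, and $w_t \le 0$ at $(\s_0,t_0)$, while the PDE at that point forces $w_t = w_{\s\s}/v \ge 0$, producing only a non-strict contradiction. To upgrade to a strict one I would instead compare $r$ against the strict supersolution $\bar{r}^\epsilon(t) := \bar{r}(t) + \epsilon$; plugging in gives $\bar{r}^\epsilon_t - \mathrm{RHS}(\bar{r}^\epsilon) = -C\bar{r} - (-C\bar{r}^\epsilon) = C\epsilon > 0$, and the same first-touching calculation now delivers $w^\epsilon_t \ge C\epsilon > 0$ at any would-be contact point, contradicting $w^\epsilon_t \le 0$. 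Hence $r < \bar{r} + \epsilon$ for every $\epsilon > 0$, so $r \le \bar{r}$; the strict bound $r < r_2 e^{-Ct}$ then follows by combining this with the compactness-based initial gap $r(\cdot,0) \le r_2 - \delta$ (valid for some $\delta > 0$ by continuity on the compact $\S$) and running the same argument with $r_2 - \delta$ in place of $r_2$. The lower barrier $\underline{r}$ is handled by the symmetric argument applied to $\tilde{w} := r - \underline{r}$ with subsolution perturbation $\underline{r}^\epsilon := \underline{r} - \epsilon$.

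The only real technical point is uniform parabolicity of the $w$-equation, which is built into the standing assumption $r(\cdot,t)\in I$; beyond that, the argument is routine maximum-principle bookkeeping.
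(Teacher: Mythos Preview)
Your proof is correct and follows essentially the same approach as the paper: both compare against the exact barrier solutions $r_i e^{-Ct}$, perturb them to make the first-touching argument strict, and then let the perturbation vanish. The only cosmetic difference is that the paper perturbs the exponent, working with $\tilde r = r_2 e^{-C(1-\varepsilon)t}$ so that the extra term $C\varepsilon\tilde r$ appears directly in the $w$-equation, whereas you perturb additively with $\bar r+\epsilon$ and then recover the strict inequality via the $r_2-\delta$ compactness trick; these are interchangeable variations on the same maximum-principle bookkeeping.
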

\begin{proof}
First, our hypothesis implies that the curves generated by $r_1$ and $r_2$ are normal graphical and admissible for the flow.
The solution curves are generated by $r_ie^{-Ct}$ (see the calculation in Section \ref{SderivV}).

Let $\varepsilon>0$ be a parameter.
Set $\tilde{r} = r_2e^{-C(1-\varepsilon)t}$ and $w(\s,t) = \tilde{r}-r(\s,t)$.
Suppose for contradiction that there is a first point $(\s^*,t^*)$ such that $w(\s^*,t^*) = 0$. Since such a point is a new minimum we have 
\begin{align*}
w_t(\s^*,t^*) &\leq 0\\
w_\s(\s^*,t^*) & = 0\\
w_{\s\s}(\s^*,t^*)& \geq 0.
\end{align*}
The evolution equation \eqref{PDE r} then implies, at $(u^*,t^*)$, 
\begin{align*}
0\geq w_t& = -C(1-\varepsilon)r_2e^{-C(1-\varepsilon)t}-r_t\\
&=-C(1-\varepsilon)\tilde{r}-\frac{r_{\s\s}}{v}-\frac{r_{\s}(kr)_\s}{v(1-kr)}+Cr\\
& = -Cw+\frac{w_{\s\s}}{v}+\frac{w_\s(kr)_\s}{v(1-kr)} + C\varepsilon \tilde{r}\\
&\ge C\varepsilon \tilde{r} > 0
\,.
\end{align*}
The inequality for $r_1$ follows analogously by replacing $w$ with $r(\s,t)-\hat{r} = r(\s,t)-r_1e^{-C(1-\varepsilon)t}$.
Since the estimate is valid for all $\varepsilon>0$, we obtain the claimed result.
\end{proof}
This shows that the size of the graph function $|r|$ decays exponentially.
Next we prove a gradient bound.

\begin{lem}
Any solution $r$ to \eqref{PDE r}  satisfies
\[
|r_\s(\s,t)| 
\leq 
e^{\frac{c_1}{2}t}\sqrt{
    \sup\limits_{\s\in [0,L]}
        r_\s^2(\s,0)
    + \frac{c_2}{c_1}}
\]
where $c_1, c_2$ are constants that depend on the initial data $r_0$ and $\eta$ only.
\label{LMrsbd}
\end{lem}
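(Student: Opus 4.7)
The plan is to apply a parabolic maximum principle to $\phi := r_\s^2$, leveraging the $L^\infty$ bound on $r$ from Lemma \ref{LMrbd} together with the admissibility condition $r(\cdot,t) \in I^\eta$. These give uniform lower bounds $|1 - k^\eta r| \ge \delta_0 > 0$ and $v \ge \delta_0^2$ along the solution, which keep the equation uniformly parabolic and prevent blow-up of the rational coefficients.

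First I would differentiate \eqref{PDE r} once in $\s$ and multiply by $2r_\s$ to produce an evolution equation for $\phi$. Two observations streamline the subsequent maximum-principle step. At any spatial maximiser $\s^*$ of $\phi(\cdot,t)$, the identity $\phi_\s = 2r_\s r_{\s\s} = 0$ forces either $\phi(\s^*,t)=0$ (which makes the bound trivial at that time) or $r_{\s\s}(\s^*,t)=0$; in the latter case every $r_{\s\s}$ appearing algebraically in the computation drops out. Moreover $\phi_{\s\s}(\s^*,t) \le 0$, so after rewriting the principal part as
\[
\frac{2 r_\s r_{\s\s\s}}{v} \;=\; \frac{\phi_{\s\s}}{v} - \frac{2 r_{\s\s}^2}{v},
\]
both summands on the right are non-positive at $\s^*$ and may be discarded.

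Second, what remains at $\s^*$ is a sum of rational functions in $r, r_\s, k^\eta, k^\eta_\s, k^\eta_{\s\s}$ over powers of $v$ and $1-k^\eta r$, plus the zeroth-order contribution $-2Cr_\s^2 = -2C\phi$. Using the lower bounds $v \ge r_\s^2$, $v \ge \delta_0^2$, $|1-k^\eta r| \ge \delta_0$, along with Young's inequality $|r_\s| \le 1 + r_\s^2$, each such term can be dominated by $c_1 \phi + c_2$ for constants $c_1, c_2$ depending only on $\eta$ and on $\|r_0\|_\infty$ (via Lemma \ref{LMrbd}). Hamilton's trick applied to $\phi_{\max}(t) := \max_\s \phi(\s,t)$ then yields the ODI
\[
\tfrac{d}{dt}\phi_{\max}(t) \le c_1 \phi_{\max}(t) + c_2,
\]
and Grönwall gives $\phi_{\max}(t) \le e^{c_1 t}\bigl(\phi_{\max}(0) + c_2/c_1\bigr)$. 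Taking square roots produces the claimed estimate.

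The main obstacle is the bookkeeping in the third step: verifying that every term which is \emph{a priori} superlinear in $\phi$ is in fact controlled once $v \ge r_\s^2$ is used in the denominator. A typical offender is a piece of the form $r_\s^2 (k^\eta r_\s)^2 / v^2$, which appears quadratic in $\phi$ but satisfies $r_\s^2(k^\eta r_\s)^2 / v^2 \le (k^\eta)^2 \phi^2 / \phi^2 = (k^\eta)^2$ once we invoke $v^2 \ge r_\s^4 = \phi^2$. No conceptual novelty beyond this careful denominator accounting, combined with the admissibility-based lower bounds on $v$ and $1-k^\eta r$, is required to close the argument.
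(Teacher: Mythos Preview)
Your proposal is correct and follows essentially the same route as the paper: derive the evolution equation for $Q=r_\s^2$, use $Q/v\le 1$ together with the uniform lower bound on $1-k^\eta r$ (from Lemma~\ref{LMrbd}) to reduce every apparently superlinear term to one that is at most linear in $Q$, and conclude via the maximum principle. The only cosmetic differences are that the paper keeps the $r_{\s\s}$-terms as a $Q_\s$-coefficient (rather than killing them via $r_{\s\s}=0$ at the maximiser) and applies the maximum principle to the auxiliary quantity $X=e^{-c_1 t}(Q+c_2/c_1)$ instead of invoking Hamilton's trick plus Gr\"onwall; these are interchangeable formulations of the same argument.
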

\begin{proof}
Differentiate equation \eqref{PDE r} in $\s$ to see that
\begin{align*}
(r_\s)_t &= \frac{r_{\s\s\s}}{v}-\frac{v_\s r_{\s\s}}{v^2}-Cr_\s+r_{\s\s}\left(\frac{k_\s r+kr_\s}{v(1-kr)}\right)+r_\s\left(\frac{k_\s r+kr_\s}{v(1-kr)}\right)_\s
\\
&=\frac{r_{\s\s\s}}{v}-\frac{v_\s r_{\s\s}}{v^2}-Cr_\s+r_{\s\s}\left(\frac{k_\s r+kr_\s}{v(1-kr)}\right)
\\
&\quad
+ r_\s\left( \frac{k_{\s\s}r+2k_\s r_\s+kr_{\s\s}}{v(1-kr)}-\frac{(k_\s r+kr_\s)v_\s}{v^2(1-kr)}+\frac{(k_\s r+kr_\s)^2}{v(1-kr)^2}\right)\,.
\end{align*}
Substituting
\[
v_\s = -2(1-kr)(k_\s r+kr_\s)+2r_\s r_{\s\s}
\]
we have
\begin{align*}
r_{\s t}
&= 
    \frac{r_{\s\s\s}}{v}
    - Cr_\s
    - \frac{r_{\s\s}}{v^2}
        \left(
        -2(1-kr)(k_\s r+kr_\s)+2r_\s r_{\s\s}
        \right)
    + r_{\s\s}\left(\frac{k_\s r+kr_\s}{v(1-kr)}\right)
\\
&\qquad
+ r_\s\bigg(
    \frac{k_{\s\s}r+2k_\s r_\s+kr_{\s\s}}{v(1-kr)}
    + 2\frac{(k_\s r+kr_\s)^2}{v^2}
    - 2\frac{r_\s r_{\s\s}(k_\s r+kr_\s)}{v^2(1-kr)}
    + \frac{(k_\s r+kr_\s)^2}{v(1-kr)^2}
    \bigg)
\,.
\end{align*}
Arranging by the powers of $r_\s$ we find
\begin{align*}
r_{\s t}
- \frac{r_{\s \s \s }}{v}
&= 
    r_\s\left(
        - C
        + \frac{k_{\s \s }r}{v(1-kr)}
        + \frac{2k_\s^2r^2}{v^2}
        + \frac{k_\s^2r^2}{v(1-kr)^2}
    \right)
\\
&\quad
+ \frac{r_\s^2}{v}\left( \frac{2k_\s }{(1-kr)}   
+ \frac{4rkk_\s }{v}+\frac{2rkk_\s }{(1-kr)^2}\right)
+ \frac{r_\s^3}{v}
    \left( 
        \frac{2k^2}{v}+\frac{k^2}{(1-kr)^2}
    \right)
\\
&\quad
+ \frac{r_{\s \s }}{v}
  \left(
    \frac{2(1-kr)(k_\s r+kr_\s) - 2r_\s r_{\s\s}}{v}
    + \frac{k_\s r + kr_\s}{(1-kr)} 
    + \frac{kr_\s }{(1-kr)}
    - \frac{2r_\s (k_\s r+kr_\s )}{v(1-kr)} 
\right)
\,.
\end{align*}
Now, setting $Q = r_\s^2$, we have
\begin{align*}
Q_t & = 2r_\s r_{\s t}\\
Q_\s  &= 2r_\s r_{\s \s }\\
Q_{\s \s } &= 2(r_{\s \s })^2+2r_\s r_{\s \s \s }.
\end{align*}
Thus we have
\begin{align*}
Q_t
- \frac{Q_{\s \s }}{v}
&= 
2r_\s 
\left(
    r_{\s t}-\frac{r_{\s \s \s }}{v}
\right)
- \frac{2r_{\s\s}^2}{v}
\\&= 
2Q\left( 
    -C 
    + \frac{k_{\s \s }r}{v(1-kr)}
    + \frac{2(k_\s )^2r^2}{v^2}
    + \frac{(k_\s )^2r^2}{v(1-kr)^2}
    \right)
\\&\quad
+ \frac{2(r_\s)Q}{v}
    \left( 
    \frac{2k_\s }{(1-kr)}   
    + \frac{4rkk_\s}{v}
    + \frac{2rkk_\s}{(1-kr)^2}
    \right)
\\&\quad
+ \frac{2Q^2}{v}\left( \frac{2k^2}{v}
+ \frac{k^2}{(1-kr)^2}\right)
\\&\quad
+ \frac{Q_\s }{v}
    \left(
    - v_\s  
    + \frac{(k_\s r+kr_\s )}{v(1-kr)} 
    + \frac{k}{v(1-kr)}
    - \frac{2r_\s (k_\s r+kr_\s )}{v^2(1-kr)} 
\right)
\\&\quad
- \frac{2r_{\s \s }^2}{v}
\,.
\end{align*}
Since $v = (1-kr)^2 + Q$, we have $\frac{Q}{v} \leq 1$.
The uniform bounds for $r$ from Lemma \ref{LMrbd} imply $1/(1-kr)$ is uniformly bounded.
Thus
\[
Q_t - \frac{Q_{\s \s }}{v} 
\le 
    c_1Q
    + c_2
    + Q_{\s }b(k,k_\s,r,r_\s)
\] 
where $b$ is a continuous function and $c_i = c_i(\eta,r_0)$.

The maximum principle applied to the quantity
\[
X = e^{-c_1t}\left(
    Q+\frac{c_2}{c_1}
    \right)
\] 
yields the estimate
\[
|r_\s (\s,t)| 
\leq 
    e^{\frac{c_1}{2}t}
    \sqrt{\left(\sup\limits_{\s \in [0,L]}r_\s (\s ,0)\right)^2+\frac{c_2}{c_1}}
\,,
\]
as required.
\end{proof}

Lemma \ref{LMrsbd} proves part $(i)$ of Theorem \ref{TMmain}.


 \section{Existence and Uniqueness}
 \label{Sexistuniq}

Section \ref{Sapriori} gives a uniform bound on the gradient on a bounded time interval for any solution to \eqref{PDE r}.
In this section, we use results of Angenent \cite{angenent1990parabolic, angenent1991parabolic,angenent1990nonlinear} to obtain global existence, uniqueness, and smoothing.
We summarise this in the following.

\begin{prop}\label{PRglobalwp}
Let $\eta:\S\rightarrow \R^2$ be an embedded smooth (or analytic) curve, and $\gamma_0:\S\to\R^2$ a curve with $\gamma_0(\S)\subset T^\eta$, that is, normal graphical over $\eta$.
Assume further that $r_0\in h^{2+\alpha}([0,L(\eta)])$.

Then, the solution $\gamma:\S\times[0,\infty)\to\R^2$ to the target flow \eqref{EQflow} exists globally in time, is unique, and $\gamma(\cdot,t)$ is smooth (or analytic) for each $t\in(0,\infty)$. 
\end{prop}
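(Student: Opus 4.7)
The plan is to cast \eqref{PDE r} as an abstract quasilinear Cauchy problem on the little H\"older space $h^{2+\alpha}(\S)$ and invoke Angenent's nonlinear analytic semigroup machinery \cite{angenent1990parabolic,angenent1990nonlinear}. I would write \eqref{PDE r} in the form $r_t = A(r)r + f(r)$, with principal part
\[
A(r)r = \frac{r_{\s\s}}{(1-k^\eta r)^2 + r_\s^2}.
\]
Whenever $r(\cdot)\in I$ the coefficient is bounded above and uniformly away from zero, since $|1-k^\eta r|$ is, so $A(r)$ is a uniformly elliptic second-order operator with H\"older continuous coefficients. This places the problem squarely in the setting of Angenent's theorems.

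To apply them I would take $E_0 = h^{\alpha}(\S)$, $E_1 = h^{2+\alpha}(\S)$, and admissible open set
\[
\mathcal{O} = \{ r \in h^{2+\alpha}(\S) : r(\s) \in I \text{ for all } \s \}.
\]
For each $r\in\mathcal{O}$ the coefficients of $A(r)$ and the lower-order nonlinearity $f(r)$ depend smoothly (analytically, if $\eta$ is) on $(r,r_\s)$, so the assignment $r\mapsto A(r)\in\mathcal{L}(E_1,E_0)$ is locally Lipschitz, and for each fixed $r$ the operator $A(r)$ generates an analytic semigroup on $E_0$ with domain $E_1$. Angenent's local existence theorem then produces a unique maximal solution
\[
r\in C([0,T^*),h^{2+\alpha}(\S))\cap C^1([0,T^*),h^{\alpha}(\S)),
\]
and his analytic smoothing result upgrades this to smoothness (resp. analyticity) of $r$ on $\S\times(0,T^*)$ whenever $\eta$ is smooth (resp. analytic), which in particular yields the corresponding regularity of $\gamma$.

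To upgrade to global existence I would use the standard continuation alternative: as $t\uparrow T^*$, either $r(\cdot,t)$ leaves every compact subset of $\mathcal{O}$ or $\|r(\cdot,t)\|_{h^{2+\alpha}}\to\infty$. Lemma \ref{LMrbd} keeps $r$ strictly inside a fixed compact sub-interval of $I$ for all time, and Lemma \ref{LMrsbd} bounds $\|r_\s\|_\infty$ on every finite time interval; together these yield uniform parabolicity of \eqref{PDE r} on $[0,T]$ for each $T<T^*$. Applying a standard interior parabolic Schauder bootstrap to the smooth solution on any sub-interval $[\delta,T]\subset(0,T^*)$ then promotes these $C^0$--$C^1$ bounds to a uniform $h^{2+\alpha}$ bound up to $T^*$, ruling out both blow-up alternatives and forcing $T^* = \infty$. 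The main obstacle is the abstract verification of Angenent's generator and Lipschitz hypotheses; once these are in place, the degeneracy of the coefficient $1/v$ that complicates a direct PDE approach becomes innocuous, and it is precisely to carry out this verification that the paper is forced to work in $h^{2+\alpha}$ rather than in the full H\"older scale $C^{2+\alpha}$, where the necessary interpolation and density properties fail.
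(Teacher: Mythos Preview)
Your proposal is correct and uses essentially the same machinery as the paper: Angenent's nonlinear semigroup theory on the little H\"older pair $(h^{2+\alpha},h^\alpha)$, with the open set $\mathcal{O}=\{r\in h^{2+\alpha}:r(\s)\in I^\eta\}$, together with the a-priori estimates of Section~\ref{Sapriori}. The one organisational difference is worth noting. The paper separates existence and uniqueness: global existence is obtained directly from \cite[Theorem 10.1]{angenent1990parabolic} by verifying the structural conditions $(F_1)$--$(F_4)$ (the $r$-- and $r_\s$--bounds of Lemmas~\ref{LMrbd} and~\ref{LMrsbd} give the required $\lambda,\mu,\nu$ on every finite time interval, so the theorem produces a solution for every $T>0$ at once), and uniqueness is then proved separately via \cite[Theorem 2.7]{angenent1990nonlinear} after an explicit Fr\'echet-differentiability check of $F$. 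You instead invoke the abstract quasilinear theorem to get local existence and uniqueness simultaneously, and then run a continuation argument (bounds on $r,r_\s$ $\Rightarrow$ uniform parabolicity $\Rightarrow$ Schauder bootstrap $\Rightarrow$ uniform $h^{2+\alpha}$ bound) to rule out finite-time blow-up. Both routes are valid; the paper's buys global existence without any further estimate, at the cost of a second theorem for uniqueness, while yours packages existence and uniqueness together but must then justify the $C^1\to C^{2,\alpha}$ bootstrap.
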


The rest of this section is devoted to proving Proposition \ref{PRglobalwp}.
We begin by recalling the following theorem.

\begin{thm}[{\cite[Theorem 10.1]{angenent1990parabolic}}]
\label{TMang1}
Let $F:\mathbb{S}^1\times \mathbb{R}^3 \rightarrow \mathbb{R}$ satisfy
\begin{align*}
(F_1) &\text{ } F(x,r,p,q) \text{ is a locally Lipschitz function of its four arguments.}\\
(F_2) &\text{ }\lambda \leq F_q \leq \lambda^{-1} \\
(F_3) &\text{ }|F(x,r,p,0)| \leq \mu \\
(F_4) & \text{ }|F_x|+|F_r|+|qF_p| \leq \nu(1+|q|^2)
\end{align*}
where $\lambda,\mu$ and $\nu$ are constants.
Let $r_0$ be a given Lipschitz function. Then the problem
\begin{equation}
\begin{cases}r_t = F(\s,r,r_\s,r_{\s\s}) &\text{ in } \mathbb{S}^1\times(0,T),\\
r(\cdot, 0) = r_0 &\text{ on } \mathbb{S}^1
\end{cases}
\end{equation}
has a solution for any $T> 0$.
\end{thm}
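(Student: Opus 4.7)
The plan is to prove existence by a standard regularisation scheme: smoothly approximate the Lipschitz initial datum, solve each approximating smooth problem on its maximal interval by classical quasilinear parabolic theory, establish uniform a priori estimates up to any preassigned $T$ using the structure conditions $(F_1)$--$(F_4)$, and pass to the limit by compactness.

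First I would approximate $r_0$ by a sequence of smooth periodic functions $r_0^\epsilon$ satisfying $\|r_0^\epsilon\|_\infty \le \|r_0\|_\infty$ and $\|(r_0^\epsilon)_\s\|_\infty \le \Lip(r_0)$, via convolution with a periodic mollifier. Condition $(F_1)$ and the uniform parabolicity $(F_2)$ place the problem $r^\epsilon_t = F(\s, r^\epsilon, r^\epsilon_\s, r^\epsilon_{\s\s})$, $r^\epsilon(\cdot,0) = r_0^\epsilon$, inside the scope of classical quasilinear theory on $\S^1$: by a Schauder fixed-point/contraction argument one obtains a smooth solution $r^\epsilon$ on a maximal interval $[0,T^\epsilon)$, with the standard continuation criterion that either $T^\epsilon = T$ or $\|r^\epsilon(\cdot,t)\|_{C^1}$ blows up as $t \to T^\epsilon$.

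Second, the core of the argument is a set of uniform-in-$\epsilon$ a priori estimates on $[0,T]$. A $C^0$ bound follows from the maximum principle: at a spatial maximum $r^\epsilon_\s = 0$ and $r^\epsilon_{\s\s} \le 0$, so $(F_2)$--$(F_3)$ give $r^\epsilon_t \le F(\s,r^\epsilon,0,0) \le \mu$, hence $\|r^\epsilon\|_\infty \le \|r_0\|_\infty + \mu T$. The decisive step is a Bernstein-type gradient bound: setting $Q = (r^\epsilon_\s)^2$ and differentiating the equation in $\s$,
\begin{align*}
Q_t - F_q Q_{\s\s} = 2 r^\epsilon_\s F_x + 2 Q F_r + 2 r^\epsilon_\s F_p r^\epsilon_{\s\s} - 2 F_q (r^\epsilon_{\s\s})^2.
\end{align*}
At a spatial maximum of $Q$, the identity $Q_\s = 2 r^\epsilon_\s r^\epsilon_{\s\s} = 0$ forces either $Q = 0$ or $r^\epsilon_{\s\s} = 0$; in the latter case $(F_4)$ collapses to $|F_x|+|F_r| \le \nu$, and the usual envelope argument yields $\frac{d}{dt}\max_\s Q \le c(1+\max_\s Q)$, giving a bound on $Q$ on $[0,T]$ depending only on $\Lip(r_0)$, $\nu$, and $T$. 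Once $\|r^\epsilon\|_{C^1}$ is uniformly controlled, the equation becomes uniformly parabolic with bounded coefficients; De~Giorgi--Nash--Moser gives a uniform $C^\alpha$ estimate for $r^\epsilon_\s$, and interior Schauder theory then upgrades this to a uniform $C^{2+\alpha,1+\alpha/2}$ bound on $[\tau,T]\times\S^1$ for each $\tau > 0$. This forces $T^\epsilon = T$, and Arzel\`a--Ascoli extracts a subsequential limit $r$ which is a classical solution on $(0,T]\times\S^1$ attaining $r_0$ uniformly as $t\to 0^+$.

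The principal obstacle is the gradient estimate for merely Lipschitz initial data. The structure condition $(F_4)$ is calibrated exactly so that the ``bad'' mixed term $2 r^\epsilon_\s F_p r^\epsilon_{\s\s}$ cannot destroy the bound on $Q$ for large $\Lip(r_0)$: the product $|q F_p| \le \nu(1+q^2)$ rather than a naive bound on $|F_p|$ alone is what permits absorption into the good term $-2 F_q (r^\epsilon_{\s\s})^2$ (or, as exploited above, the cancellation $r^\epsilon_{\s\s} = 0$ at maxima of $Q$). Without $(F_4)$ the Bernstein estimate fails, only short-time existence up to $C^1$ blow-up can be asserted, and the continuation to arbitrary $T$ breaks down.
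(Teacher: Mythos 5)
This statement is imported verbatim as \cite[Theorem~10.1]{angenent1990parabolic}; the paper supplies no proof of it and simply invokes Angenent's result as a black box in Section~\ref{Sexistuniq}. Your proposal is therefore not in competition with any argument given in the paper. That said, your sketch (mollify the Lipschitz datum, solve the smooth approximating problems, derive a $C^0$ bound and a Bernstein gradient estimate from $(F_1)$--$(F_4)$, upgrade via De~Giorgi--Nash--Moser and Schauder, then pass to the limit) is a fair reconstruction of the standard route to such one-dimensional quasilinear existence theorems in the tradition of Ladyzhenskaya--Solonnikov--Ural'tseva and Kru\v{z}kov, and the computation of $Q_t - F_q Q_{\s\s}$ is correct.

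One internal inconsistency is worth flagging. In your maximum-point argument for $Q = (r^\epsilon_\s)^2$, the identity $Q_\s = 2r^\epsilon_\s r^\epsilon_{\s\s} = 0$ forces $r^\epsilon_{\s\s} = 0$ at any interior spatial maximum with $Q>0$, and then \emph{both} the mixed term $2r^\epsilon_\s F_p r^\epsilon_{\s\s}$ and the dissipative term $-2F_q(r^\epsilon_{\s\s})^2$ vanish identically. Your closing paragraph, which asserts that $(F_4)$ is calibrated precisely so that the mixed term can be absorbed into $-2F_q(r^\epsilon_{\s\s})^2$, is therefore a red herring relative to the argument you actually gave: only the $q=0$ specialisation of $(F_4)$, namely $|F_x|+|F_r|\le\nu$, is used at the maximum. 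The full growth condition $|qF_p|\le\nu(1+q^2)$ is genuinely needed, but in a different place---either in an integral/cutoff version of the Bernstein estimate where the pointwise cancellation $r^\epsilon_{\s\s}=0$ is unavailable, or in establishing the interior H\"older gradient estimate. Finally, the assertion that the limit attains $r_0$ uniformly as $t\to 0^+$ requires a uniform-in-$\epsilon$ modulus of continuity at $t=0$ (typically from explicit Lipschitz sub/supersolutions); this step should be argued rather than asserted.
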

We can apply the above theorem to \eqref{PDE r} .
With the above notation we set
\[
F(x,r,p,q) 
= 
    \frac{q}{(1-kr)^2+p^2}
    + \frac{p(kp+k_\s r)}{(1-kr)
        ((1-kr)^2+p^2)}
    -Cr
\,.
\]
Note that $F$ is smooth (or analytic) since $\eta$ is smooth (or analytic) and the a-priori estimates ensure $1-kr$ is uniformly bounded away from zero.

Next we calculate:
\begin{align*}
F_x 
&= 0
\\
F_r 
&= 
    \frac{2(1-kr)k}{((1-kr)^2+p^2)^2}q
    + \frac{pk_\s}{((1-kr)^2+p^2)(1-kr)}
    + \frac{kp(kp+k_\s r)}{(1-kr)^2((1-kr)^2+p^2)}
\\
&\quad
    + \frac{2kp(kp+k_\s r)}{((1-kr)^2+p^2)^2}
    - C
\\
F_p 
&= - \frac{2p}{((1-kr)^2+p^2)^2}q
    + \frac{2kp + k_\s r}{(1-kr)
        ((1-kr)^2+p^2)}
    - \frac{2p^2(kp+k_\s r)}{(1-kr)((1-kr)^2+p^2)^2}
\\
F_q 
&= \frac{1}{(1-kr)^2+p^2}
\,.
\end{align*}
The a-priori estimates imply that we may restrict the domain of $F$ to be a set where each of the conditions $(F_i)$ of Angenent's theorem hold.
Applying Angenent's theorem, we conclude existence of a solution $r:[0,L^\eta]\times[0,T]\to\R$ for any $T>0$, where $r_0$ is Lipschitz.

While existence only requires Lipschitz data, we need much more in order to obtain uniqueness.
The theorem that we shall apply is as follows:

\begin{thm}[{\cite[Theorem 2.7]{angenent1990nonlinear}}]
\label{T: ang}
Let $\mathcal{O} \subset E_1$ be open and $F:\mathcal{O} \rightarrow E_0$ be a $C^k$ map.
Consider the abstract initial value problem
\begin{equation}
\label{EQnonlin}
\begin{cases}
r_t = F(r(t)) & \text{ on } [0,T]\\
r(0)  = r_0 & \text{ for } t = 0.
\end{cases}
\end{equation}
If the Frechet derivative $dF$ belongs to $M_1(E)$ for all $r\in\mathcal{O}$ then the initial value problem \eqref{EQnonlin} has a unique (strict) solution on some small enough interval.
\end{thm}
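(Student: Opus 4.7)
The plan is to convert the abstract initial value problem into a fixed-point equation on a time-weighted function space adapted to the maximal regularity of the linearisation. First I would freeze the coefficient at the initial datum, setting $A_0 := dF(r_0) \in M_1(E)$ and decomposing
\[
F(r) = F(r_0) + A_0(r - r_0) + g(r),
\]
where the remainder $g(r) := F(r) - F(r_0) - A_0(r-r_0)$ satisfies $g(r_0) = 0$ and $dg(r_0) = 0$. Because $F$ is $C^k$ with $k \ge 1$, the Nemitskii map induced by $g$ on curves staying in a small ball about $r_0$ is locally Lipschitz with Lipschitz constant tending to zero as the radius of the ball shrinks, which is the superlinear smallness that will drive the contraction.

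The membership $A_0 \in M_1(E)$ is by definition a maximal regularity statement: the linear Cauchy problem $u_t - A_0 u = h(t)$, $u(0) = u_0$, admits a unique strict solution given by a bounded solution operator $\Sigma_T$ between the natural little H\"older scales, with operator norm controlled uniformly as $T \searrow 0$. With this linear theory in hand, I would define
\[
\Phi(r)(t) := \Sigma_T\bigl( F(r_0) + g(r(\cdot)) - A_0 r_0,\, r_0 \bigr)(t),
\]
and seek a fixed point of $\Phi$ in a small closed ball around the constant path $t \mapsto r_0$ inside the maximal regularity space. Since $\|\Sigma_T\|$ is uniformly bounded for small $T$ and the Lipschitz constant of the Nemitskii map of $g$ is small on a small ball, both the self-mapping property and the contraction estimate can be closed by choosing the ball radius and $T$ sufficiently small, producing a unique fixed point $r$ that is the desired strict solution.

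For uniqueness beyond the fixed-point argument itself, two candidate strict solutions $r_1,r_2$ give a difference $w = r_1 - r_2$ satisfying $w_t = A_0 w + (g(r_1) - g(r_2))$ with $w(0) = 0$. A second application of the same maximal regularity bound, together with the small-Lipschitz property of $g$, forces $w \equiv 0$ on a short interval, which is enough since the length of the interval depends only on $r_0$ and the structural constants. The principal technical obstacle is the precise formulation of the class $M_1(E)$ and the uniform-in-$T$ estimate for $\Sigma_T$ in the little H\"older scale: without the latter, the contraction constant cannot be made small by shrinking $T$, and the scheme fails to close. These are exactly the structural properties of sectorial operators on continuous interpolation spaces that Angenent establishes in \cite{angenent1990nonlinear}; once granted, the argument above delivers existence and uniqueness on some small interval.
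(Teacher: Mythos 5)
The paper does not supply a proof of this theorem: it is imported verbatim as Theorem 2.7 of Angenent's \emph{Nonlinear analytic semiflows} and used as a black box, with the in-paper work consisting only of verifying its hypotheses for the target flow. Your sketch nonetheless faithfully reconstructs Angenent's own argument: linearise at the datum, use that $dF(r_0) \in M_1(E)$ gives a bounded solution operator $\Sigma_T$ for the frozen linear problem with norm controlled uniformly as $T \to 0$ (you correctly identify this uniform estimate as the crux; it is precisely what the continuous interpolation / little H\"older setting provides), and run a contraction driven by the quadratic smallness of the remainder $g$.

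One detail needs adjusting. Centring the fixed-point ball at the constant path $t \mapsto r_0$ does not give the self-mapping property: $\Phi$ applied to the constant path is the solution $\bar u$ of the frozen linear problem, whose time derivative at $t = 0$ equals $F(r_0)$, so $\|\Phi(r_0^{\mathrm{const}}) - r_0^{\mathrm{const}}\|_{Y_1}$ does not shrink with $T$ unless $F(r_0) = 0$; consequently a ball around $r_0^{\mathrm{const}}$ small enough to make the Lipschitz constant of $g$ small cannot also contain $\Phi(r_0^{\mathrm{const}})$. The standard remedy is to centre the ball at $\bar u := \Sigma_T(F(r_0) - A_0 r_0,\, r_0)$; then $\Phi(\bar u) - \bar u = \Sigma_T(g(\bar u),\, 0)$, whose $Y_1$-norm does tend to zero as $T \to 0$ because $\bar u(t) \to r_0$ in $E_1$ and $g$ vanishes to second order at $r_0$. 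With that change the contraction closes, and your uniqueness step is sound because strict solutions are continuous $E_1$-valued paths and so remain near $r_0$ on a common short interval.
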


The theorem essentially converts uniqueness of solutions to the linearised equation to uniqueness for the nonlinear equation \eqref{EQnonlin}.

Let us explain the notation used above. Set $E = (E_1,E_0)$ to be a Banach couple, where $E_1$ is densely included in $E_0$.
We put
\begin{align*}
X_1(E) = C([0,1];E_0)\quad\text{and}\quad
Y_1(E) = C([0,1];E_1)\cap C^1([0,1];E_0)
\,.
\end{align*}
Write $\partial_t$ for the bounded differentiation operator mapping from $Y$ to $X$.
For any operator $A \in \mathcal{L}(E_1,E_0)$ (where $\mathcal{L}(E_1,E_0)$ is the space of bounded, linear operators from $E_1$ to $E_0$), let $\hat{A}:Y_1(E)\rightarrow X_1\bigoplus E$ be defined by 
\[
\hat{A}u = (\partial_tu(t)-Au(t),u(0))
\,.
\]
Denote by $Hol(E)$ the set of $A\in \mathcal{L}(E_1,E_0)$ that generate an analytic semigroup $e^{tA}$.

Finally we define
\[
M_1(E)
= \Big\{ 
    A \in Hol(E)\ \vert\ \hat{A} \text{ is an isomorphism between } Y_1 \text{ and } X_1 \bigoplus E 
\Big\}
\,. 
\]

To apply the theorem we set $E_0 = h^\alpha(\mathbb{S}^1)$ and $E_1 = h^{2+\alpha}(\mathbb{S}^1)$.
These are little H\"older spaces, defined as the closure of $C^{\infty}(\mathbb{S}^1)$ in the usual $C^{k,\alpha}$ norm.
Here, we are using $\S = [0,L^\eta]$ with the endpoints identified (as in the rest of the paper).

Define the open set
\[
\mathcal{O} = \{ r\in E_1 : r \in I^\eta \} \subset E_1
\,.
\]
Set $F$ to be
\[
F(r) := F(\s,r,r_\s,r_{\s\s}) = \frac{r_{\s\s}}{v(r)} + \frac{r_\s(kr)_\s}{v(r)(1-kr)}
- Cr
\]
with
\[
v(r) = (1-kr)^2 + (r_\s)^2
\,.
\]
Note that we have conflated two uses of $F$: That of Theorem \ref{TMang1} and that of Theorem \ref{T: ang}.

Now we wish to show that $F$ is Fr\'echet differentiable. 
We first calculate the Gateaux derivative of $F$:
\[
G_{F}(h)
= \frac{d}{d \varepsilon}
    F(r+\varepsilon h)\bigg|_{\varepsilon=0}
\,. 
\]
For $G_F(h)$ to be the Fr\'echet derivative, it must satisfy
\begin{equation}
\label{EQfrelim}
\frac{|F(r+h) - F(r) - G_F(h)|}{||h||_{C^{2,\alpha}}} 
\rightarrow 0 \quad\text{ as }\quad ||h||_{C^{2,\alpha}} \rightarrow 0\,.
\end{equation}

We use the fact that Fr\'echet differentiable maps form an algebra, building our way up to $F$ slowly.

For the first term of $F$ the Gateaux derivative of $r\mapsto r_{\s\s}$ is
\[
G_{r_{\s\s}}(h) = h_{\s\s}
\,.
\]
The numerator of the Fr\'echet limit \eqref{EQfrelim} of $r_{\s\s}$ is zero, and so $r\mapsto r_{\s\s}$ is Fr\'echet differentiable.

Next we calculate the Gateuax derivative of $r\mapsto v(r)$:
\[
\frac{d}{d\varepsilon}v[r+\varepsilon h]\bigg{|}_{\varepsilon = 0} = -2kh(1-kr)+2r_\s h_\s
\,.
\]
Therefore
\begin{align*}
v(r+h) 
- v(r)
- G_{v}(h) 
&= 
    (1-kr-kh)^2
    + (r_\s + h_\s)^2
    - (1-kr)^2
\\&\qquad
    - (r_\s)^2
    - 2r_\s h_\s
    + 2kh(1-kr)
\\&=
    k^2h^2
    + h_\s^2 
\le
    C||h||^2_{C^{2,\alpha}}
\,.
\end{align*}
Above $C$ is a constant dependent on $k=k^\eta$, which is uniformly bounded. Hence $r\mapsto v(r)$ is also Fr\'echet differentiable.

It is now clear that the first fraction of $F(r)$ is Fr\'echet differentiable.
The second term consists of three parts: the numerator $r\mapsto r_\s(kr)_\s$, and the product of $r\mapsto v(r)$, $r\mapsto 1-kr$ in the denominator.
Since $r\mapsto 1-kr$ is clearly Fr\'echet differentiable, we only check the numerator.
We find
\[
G_{r_\s(kr)_\s}(h)
 =
    h_\s(kr)_\s
    + r_\s(k_\s h + k h_\s)
\]
and then the numerator of the Fr\'echet limit \eqref{EQfrelim} is
\[
(r+h)_\s(kr+kh)_\s
- r_\s(kr)_\s
- (
    h_\s(kr)_\s
    + r_\s(k_\s h + k h_\s)
)
= h_u(kh)_u
\le C||h||_{C^{2,\alpha}}^2
\,.
\]
Again the constant $C$ depends on $k$ and $k_\s$.

Thus the second term in $F$ is also Fr\'echet differentiable.
Finally, the term $-Cr$ is also clearly Fr\'echet differentiable.
This settles the Fr\'echet differentiability of $F$.

Now we can apply standard parabolic theory (for example \cite[Theorem 5.6]{lieberman1996second}) to see that the linear PDE $h_t = G_{F(r)}(h)$ has a unique solution. 
Indeed, since $r_0 \in h^{2+\alpha}$ the coefficients of $G_{F(r)}(h)$ are in $h^\alpha \subset C^{0,\alpha}$ (which is $H^\alpha$ in the notation of \cite{lieberman1996second}).
In particular, we conclude that the Fr\'echet derivative of $F$ is in the space $M_1(E)$.

Applying Theorem \ref{T: ang} shows that the solution generated by Theorem \ref{TMang1} is unique.
If not, then there must exist a $T_0\in[0,\infty)$ such that there are at least two distinct solutions with initial data given by $r_0 = r(\cdot,T_0)$.
This is in contradiction with Theorem \ref{T: ang}.

Now let us turn to the smoothing effect.
This time it is a straightforward application of \cite[Corollary 2.10]{angenent1990nonlinear}, which does not require checking any further assumptions.
The conclusion is that
\[
    t^mr^{(m)}(t) \in C^0([0,T];E_1)
\]
for all $t>0$ and all $T$.
The above regularity holds for all $m$, guaranteeing that $r(t)$ is smooth for $t>0$.
Furthermore, the same result gives
\[
||t^mr^{(m)}(t)||_{E_1} \le C\frac{M^m}{m!}
\]
if $\eta$ is analytic, giving that $r(\cdot,t)$ is analytic for $t>0$ in this case.

This concludes the proof of Proposition \ref{PRglobalwp}, as well as the unique global existence and smoothing parts of Theorem \ref{TMmain}.


\section{Smooth Convergence}
\label{Ssmoothconv}

The goal of this section is to complete the proof of Theorem \ref{TMmain}.
It remains to establish convergence of $\gamma$ to $\eta$ in the smooth topology, which is equivalent to convergence of $r(\cdot,t)$ to zero in the smooth topology on $[0,L^\eta]$.

Currently, our gradient bound Lemma \ref{LMrsbd}, is not uniform.
The key observation is that if $r$ is small, a uniform bound \emph{is} available.
We can always guarantee that $r$ will be eventually arbitrarily small, as the barriers from Lemma \ref{LMrbd} converge to zero.
Indeed, if we set $T_{\varepsilon}>0$ to be such that 
\begin{equation}
\label{EQTeps}
r(\s,t) \in (-\varepsilon,\varepsilon)\quad\text{for all $s\in[0,L^\eta]$ and $t > T_{\varepsilon}$}
\,,
\end{equation}
then Lemma \ref{LMrbd} implies that $T_\varepsilon < (1/C)\log(\max\{|r_1|,r_2\}/\varepsilon)$.

The uniform gradient estimate is as follows.

\begin{prop}\label{pro: rs}
Let $\gamma:\S\times[0,\infty)\to\R^2$ be the target flow solution given by Proposition \ref{PRglobalwp}, and $r:\S\times[0,\infty)\to\R$ its graph function.
There is an $\varepsilon_0=\varepsilon_0(\eta)$ and $C_0=C_0(\eta)$ such that the following holds.
For $t>T_{\varepsilon_0}$ (as in \eqref{EQTeps}) and $C\ge C_0$ we have
\[|r_\s(\s,t)|
    \leq 
    e^{\frac{1}{2}(T_{\varepsilon_0}-t)}
    \max\limits_{\s\in [0,L]}|r_\s(T_{\varepsilon_0})|
    \,.
  \]
\end{prop}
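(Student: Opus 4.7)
The starting point is the evolution inequality for $Q = r_\s^2$ derived in the proof of Lemma \ref{LMrsbd}; namely, that there are functions $A, B, D$ of $(r, r_\s, k, k_\s, k_{\s\s})$ and a function $E$ such that
\[
Q_t - \frac{Q_{\s\s}}{v}
 = 2Q\, A + \frac{2 r_\s Q}{v}\, B + \frac{2Q^2}{v}\, D + Q_\s E - \frac{2 r_{\s\s}^2}{v},
\]
where
\[
A = -C + \frac{k_{\s\s}r}{v(1-kr)} + \frac{2k_\s^2 r^2}{v^2} + \frac{k_\s^2 r^2}{v(1-kr)^2}.
\]
The key point is that in the regime $|r|<\varepsilon$, every contribution to $A$ beyond $-C$ is multiplied by $r$ or $r^2$ and can be made arbitrarily small by choosing $\varepsilon_0$ small (depending on $\eta$), while the remaining contributions to $Q_t$ are controlled by constants depending only on $\eta$.

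\textbf{Step 1: Uniform estimates in the small-$r$ regime.} I fix $\varepsilon_0 < 1/(2\|k\|_\infty)$ so that whenever $|r|<\varepsilon_0$ we have $|1-kr| \geq 1/2$ and $v \geq 1/4$. Combining these with the smoothness of $\eta$, each of the terms in $A$ beyond $-C$ is bounded by $c_\eta \varepsilon_0$, so $A \leq -C + c_\eta \varepsilon_0$. The quantities $B$ and $D$ are bounded purely in terms of $\|k\|_\infty$, $\|k_\s\|_\infty$; call the bound $C_\eta'$.

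\textbf{Step 2: Absorbing the remaining terms into $Q$.} Using $Q/v \leq 1$, the quadratic term satisfies $\frac{2Q^2}{v}|D| \leq 2C_\eta' Q$. For the cubic term, I use the elementary inequality $|r_\s|/v \leq 1/(2|1-kr|) \leq 1$ (which follows from $(1-kr)^2 + r_\s^2 \geq 2|1-kr||r_\s|$), giving $\frac{2|r_\s|Q}{v}|B| \leq 2C_\eta' Q$. Dropping the favourable $-2r_{\s\s}^2/v$ term, I conclude that whenever $|r|<\varepsilon_0$,
\[
Q_t - \frac{Q_{\s\s}}{v} - Q_\s E \leq \bigl(-2C + c_\eta \varepsilon_0 + c_\eta''\bigr) Q
\]
for constants $c_\eta, c_\eta''$ depending only on $\eta$.

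\textbf{Step 3: Choosing constants and applying the maximum principle.} Fix $\varepsilon_0$ so that $c_\eta \varepsilon_0 \leq 1$ (shrinking from Step 1 if needed), and set $C_0 = (2 + c_\eta''+1)/2$. For $C \geq C_0$ the coefficient above is bounded by $-1$, and by \eqref{EQTeps} the hypothesis $|r|<\varepsilon_0$ holds for every $t>T_{\varepsilon_0}$. Then a standard parabolic maximum principle argument on the scalar equation for $M(t) = \max_\s Q(\cdot, t)$ (using that $Q_\s = 0$ and $Q_{\s\s}\leq 0$ at a spatial maximum, or equivalently testing against the barrier $M(T_{\varepsilon_0})e^{-(1-\delta)(t-T_{\varepsilon_0})}$ and sending $\delta\to 0$) yields
\[
M(t) \leq M(T_{\varepsilon_0})\, e^{-(t-T_{\varepsilon_0})}
\quad \text{for all } t > T_{\varepsilon_0}.
\]
Taking square roots gives the claimed bound on $|r_\s|$.

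\textbf{Main obstacle.} The only delicate point is the bookkeeping in Step 2: unlike the small-$r$ terms in $A$, the cubic term $\frac{2 r_\s Q}{v}B$ contains the piece $2k_\s/(1-kr)$ in $B$, which does not vanish as $r \to 0$. Without the sharp geometric bound $|r_\s|/v \leq 1/(2|1-kr|)$, this contribution would be of size $Q^{3/2}$ rather than $Q$, and one could not close the argument with a linear differential inequality. Everything else is a direct consequence of $|r| < \varepsilon_0$, smoothness of $\eta$, and an appropriately large choice of $C$.
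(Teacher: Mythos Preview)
Your proposal is correct and follows essentially the same route as the paper: recall the evolution of $Q=r_\s^2$ from Lemma~\ref{LMrsbd}, observe that in the small-$r$ regime the zeroth-order coefficient becomes $-C$ plus terms bounded by constants depending only on $\eta$, choose $C$ large enough to force this coefficient below $-1$, and apply the maximum principle to $e^{t}Q$. The only cosmetic differences are your explicit choice $\varepsilon_0<1/(2\|k\|_\infty)$ (the paper instead pins $\varepsilon_0$ to the tubular-neighbourhood scale) and your use of the AM--GM bound $|r_\s|/v\le 1/(2|1-kr|)$ where the paper writes the equivalent $|r_\s/v|\le 1$; both serve the same purpose of turning the $B$-contribution into a linear term in $Q$.
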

\begin{proof}
Recall the proof of Lemma \ref{LMrsbd}:
For $t>T_\varepsilon$ the evolution of $Q = r_\s^2$ can be estimated with
\begin{align*}
Q_t
- \frac{Q_{\s\s}}{v}
&\leq 
    2Q\bigg(
        - C
        + c_3\varepsilon 
        + \frac{2k_\s r_\s}{v(1-kr)}
        + \frac{2k^2}{v}
        + \frac{k^2}{(1-kr)^2}
    \bigg)
\\&\quad
    + \frac{Q_\s}{v}\left(
        - v_\s
        + \frac{(k_\s r + kr_\s)}{v(1-kr)} 
        + \frac{k}{v(1-kr)}
        - \frac{2r_\s(k_\s r + kr_\s)}{v^2(1-kr)} 
    \right)
\end{align*}
where the constant $c_3$ depends only on $\eta$ via $k$, $k_\s$ and $k_{\s\s}$.
Furthermore, as $|r_\s/v| \le 1$, we may estimate
\[
\frac{2k_\s r_\s}{v(1-kr)}
        + \frac{2k^2}{v}
        + \frac{k^2}{(1-kr)^2}
\le c_4
\]
where $c_4$ depends only on $\eta$.

Now we choose $\varepsilon_0$ and place a restriction on $C$.
In the case of non-convex $\eta$, choose $\varepsilon_0 = \max\{1/|k_{min}|,1/k_{max}\}$.
If $\eta$ is convex, choose $\varepsilon_0 = 1/k_{max}$.
Then, we require $C \ge C_0 := c_3\varepsilon_0+c_4+1$.

The evolution equation can thus be estimated by
\[
Q_t
- \frac{Q_{\s\s}}{v} 
\leq 
    -Q 
    + Q_\s\left(
        - v_\s
        + \frac{(k_\s r + kr_\s)}{v(1-kr)} 
        + \frac{k}{v(1-kr)}
        - \frac{2r_\s(k_\s r+kr_\s)}{v^2(1-kr)} 
    \right)
    \,.
\]
Let $X = e^{t}Q$.
Then
\[ 
X_t
- \frac{X_{\s\s}}{v} 
\leq X_\s\left(
        - v_\s
        + \frac{(k_\s r + kr_\s)}{v(1-kr)} 
        + \frac{k}{v(1-kr)}
        - \frac{2r_\s(k_\s r+kr_\s)}{v^2(1-kr)} 
    \right)
\,.
\]
By the maximum principle we have for $t >T_{\varepsilon_0}$
\[|r_\s(\s,t)|
\leq 
    e^{\frac{1}{2}(T_{\varepsilon_0}-t)}
    \max\limits_{\s\in [0,L]}|r_\s(T_{\varepsilon_0})|
\,,
\]
as required.
\end{proof}

Proposition \ref{pro: rs} implies that $r_\s$ becomes arbitrarily small.
We will use this control in addition to our established control on $r$ to obtain control over $r_{\s\s}$ below.

\begin{prop}
Let $\gamma:\S\times[0,\infty)\to\R^2$ be the target flow solution given by Proposition \ref{PRglobalwp}, and $r:\S\times[0,\infty)\to\R$ its graph function.
There is a $t_0 = t_0(r_0,\eta)$ such that the following holds.
For $t>\max\{t_0,T_{\varepsilon_0}\}$ and $C\ge \max\{1,C_0\}$ ($T_{\varepsilon_0}$ and $C_0$ as in Proposition \ref{pro: rs}) we have
\[
r_{\s\s}^2(\s,t)
\le \Big(\max r^2_{\s\s}(\cdot,t_0) + 2c_0e^{-t_0}\Big) e^{(t_0-t)C/2}
\]
where $c_1 = \max r^2_{\s\s}(\cdot,t_0)$.
\label{PRrssunif}
\end{prop}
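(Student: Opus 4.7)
The plan is to mirror the Bochner-type computation of Lemma \ref{LMrsbd} and Proposition \ref{pro: rs} at one higher order. First I will differentiate \eqref{PDE r} twice in $\s$ to obtain an evolution equation for $r_{\s\s}$, then multiply by $2r_{\s\s}$ to derive an equation for $P := r_{\s\s}^2$ of the shape
\[
P_t - \frac{P_{\s\s}}{v}
= -\frac{2 r_{\s\s\s}^2}{v}
    - 2C\,P
    + (\text{terms quadratic in }r_{\s\s})
    + B\,P_\s
    + I
\,,
\]
where the $-2CP$ damping arises from pairing the twice-differentiated $-Cr$ with $2r_{\s\s}$, the drift coefficient $B$ depends continuously on $\eta$, $r$ and $r_\s$, and the inhomogeneity $I$ depends only on $\eta$, $r$ and $r_\s$ (containing no factors of $r_{\s\s}$).

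The key observation is that every term quadratic in $r_{\s\s}$ carries at least one prefactor of $r$ or $r_\s$; every cross-product of the form $r_{\s\s} r_{\s\s\s}$ can be absorbed into $-2r_{\s\s\s}^2/v$ by Cauchy--Schwarz, producing further small-coefficient quadratic terms; and every term in $I$ carries at least two factors drawn from $\{r, r_\s\}$. By Lemma \ref{LMrbd} and Proposition \ref{pro: rs}, $|r|$ and $|r_\s|$ decay exponentially for $t > T_{\varepsilon_0}$. Choosing $t_0 = t_0(r_0, \eta)$ large enough that the combined coefficient of $P$ on the right-hand side is dominated by $3C/2$ (using $C \ge 1$), the required differential inequality
\[
P_t - \frac{P_{\s\s}}{v}
\le -\frac{C}{2} P
    + c_0 e^{-t}
    + B\,P_\s
\qquad\text{for }t > \max\{t_0, T_{\varepsilon_0}\}
\]
follows with $c_0 = c_0(\eta)$.

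With this differential inequality in hand, the conclusion is a standard maximum principle application: I consider an auxiliary function of the form $\Phi(\s,t) = e^{(t-t_0)C/2} P(\s,t) + A\, e^{-t}$, choose the constant $A$ so that the inhomogeneous contribution is absorbed, verify that $\Phi$ admits no new interior maximum for $t > t_0$, and evaluate at $t = t_0$ to obtain $\Phi(\cdot, t) \le \max_\s P(\s,t_0) + A e^{-t_0}$. Rearranging and tracking the constants yields the claimed bound with $2c_0 e^{-t_0}$ in the prefactor. The principal technical obstacle here is bookkeeping: the two differentiations produce many terms, and one must verify by direct inspection that every surviving $r_{\s\s}^2$-quadratic piece carries a small prefactor and that every pure inhomogeneity is controlled by a product of factors from $\{r, r_\s\}$ and bounded quantities depending on $\eta$. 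The smallness of $r$ and $r_\s$ guaranteed by Lemma \ref{LMrbd} and Proposition \ref{pro: rs}, combined with the lower bound $C \ge 1$, is precisely what powers the absorption.
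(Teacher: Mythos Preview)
Your overall strategy matches the paper's: differentiate \eqref{PDE r} twice, pass to $P=r_{\s\s}^2$, exploit the exponential decay of $r$ and $r_\s$ from Lemma~\ref{LMrbd} and Proposition~\ref{pro: rs}, and close with the maximum principle on a function of the form $e^{(t-t_0)C/2}P + A e^{-t}$. The final differential inequality you aim for, and the barrier argument you sketch, are exactly what the paper obtains. However, the structural claim you make about the $P$-equation is incorrect, and as written the argument has a gap.

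When you differentiate twice, the factors $v_\s$ and $v_{\s\s}$ themselves contain $r_{\s\s}$ and $r_{\s\s}^2$ (since $v_\s=-2(1-kr)(kr)_\s+2r_\s r_{\s\s}$ and $v_{\s\s}=\cdots+2r_{\s\s}^2+2r_\s r_{\s\s\s}$). Consequently the evolution of $r_{\s\s}$ contains a \emph{cubic} term $r_{\s\s}^3 f_1$ with $f_1=-2/v^2+8r_\s^2/v^3$, as well as a quadratic term $r_{\s\s}^2 f_2$. After multiplying by $2r_{\s\s}$, the $P$-equation therefore carries a \emph{quartic} contribution $2r_{\s\s}^4 f_1$ and a cubic one $2r_{\s\s}^3 f_2$; it is not of the shape ``$-2CP$ plus small-coefficient quadratic terms plus $BP_\s$ plus inhomogeneity'' that you assert. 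Crucially, $f_1\to -2$ as $r,r_\s\to 0$, so the quartic coefficient is \emph{not} small: it is the sign, not the size, that matters. Similarly, Cauchy--Schwarz on the genuine cross terms $r_{\s\s}^2 r_{\s\s\s}$ (coming from the $r_{\s\s}$ piece of $v_\s$) does not produce ``small-coefficient quadratic terms'' but small-coefficient \emph{quartic} terms.

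The paper closes the estimate precisely by exploiting this good sign: for $t$ large one has $2f_1\le -2$, giving a favourable $-2r_{\s\s}^4$ which then absorbs the cubic via $|r_{\s\s}|^3\le r_{\s\s}^4+\tfrac14 r_{\s\s}^2$ (and likewise handles the quartic remainders from Cauchy--Schwarz). Only after this step does one reach your target inequality $P_t-P_{\s\s}/v\le -\tfrac{C}{2}P + c_0 e^{-t} + BP_\s$. Without identifying and using the sign of $f_1$, the superlinear-in-$P$ terms cannot be controlled and the maximum-principle argument does not close.
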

\begin{proof}
Recall that
\[
(r_\s)_t = \frac{r_{\s\s\s}}{v}-\frac{v_\s r_{\s\s}}{v^2}-Cr_\s+r_{\s\s}\left( \frac{(kr)_\s}{v(1-kr)}\right)+r_\s\left( \frac{(kr)_\s}{v(1-kr)}\right)_\s
\]
so
\begin{align*}
    (r_{\s\s})_t = &\frac{r_{\s^4}}{v}-2\frac{v_\s r_{\s\s\s}}{v^2}-\frac{v_{\s\s}r_{\s\s}}{v^2}+\frac{2(v_\s)^2r_{\s\s}}{v^3}-Cr_{\s\s}\\
    &+r_{\s\s\s}\left( \frac{(kr)_\s}{v(1-kr)}\right)+2r_{\s\s}\left( \frac{(kr)_\s}{v(1-kr)}\right)_\s+r_\s\left( \frac{(kr)_\s}{v(1-kr)}\right)_{\s\s}.
\end{align*}
Now
\begin{align*}
    \left( \frac{(kr)_\s}{v(1-kr)}\right)_\s
    &= 
        \frac{(kr)_{\s\s}}{v(1-kr)}
        -\frac{(kr)_\s v_\s}{v^2(1-kr)}
        +\frac{((kr)_\s)^2}{v(1-kr)^2}
\,,\text{ so }
\\
     \left( \frac{(kr)_\s}{v(1-kr)}\right)_{\s\s}
&= \frac{(kr)_{\s\s\s}}{v(1-kr)}
- 2\frac{(kr)_{\s\s}v_\s}{v^2(1-kr)}
+ \frac{(kr)_{\s\s}(kr)_\s}{v(1-kr)^2}
- \frac{(kr)_\s v_{\s\s}}{v^2(1-kr)}
\\&\quad
    + 2\frac{(kr)_\s v^2_\s}{v^3(1-kr)}
    -\frac{2((kr)_\s)^2v_\s}{v^2(1-kr)^2}
    + \frac{2(kr)_\s(kr)_{\s\s}}{v(1-kr)^2}
    + 2\frac{((kr)_\s)^3}{v(1-kr)^3}.
\end{align*}
We will also use
\begin{align*}
v_{\s} 
&= -2(1-kr)(kr)_\s+2(r_\s)(r_{\s\s})
\\
v_{\s\s} 
&= 
    - 2(1-kr)(kr)_{\s\s}
    + 2((kr)_\s)^2
    + 2r_{\s\s}^2
    + 2r_\s r_{\s\s\s}
\\
v_{\s}^2 
&= 
    4(1-kr)^2(kr)_\s^2
    - 8(1-kr)(kr)_\s r_\s r_{\s\s}
    + 4r_\s^2r_{\s\s}^2
    \,.
\end{align*}
We write the evolution for $r_{\s\s}$ grouping by powers of $r_{\s\s}$.
\begin{align*}
(r_{\s\s})_t &- \frac{(r_{\s\s})_{\s\s}}{v}
\\&=  
    (r_{\s\s})^3
    \left[ 
        \frac{-2}{v^2}
        + \frac{8(r_\s)^2}{v^3}
    \right]
\\&\quad
    + (r_{\s\s})^2\bigg[ 
        \frac{-2(1-kr)k}{v^2}
        + \frac{2}{v^2}(-8(1-kr)(kr)_\s r_\s
        + \frac{2k}{v(1-kr)}
\\&\qquad
    - \frac{(kr)_\s 2r_\s}{v^2(1-kr)}
    - \frac{-4k(r_\s)^2}{v^2(1-kr)}
    + \frac{(kr)_\s kr_\s}{v(1-kr)}
\\&\qquad
    - \frac{-2(kr)_\s r_\s}{v^3(1-kr)}
    + \frac{8(kr)_\s(r_\s)^3}{v^2(1-kr)}
    + \frac{(kr)_\s kr_\s}{v(1-kr)^2}
    \bigg]
\\&\quad
    + r_{\s\s}\bigg[ 
        \frac{2(1-kr)(k_{\s\s}r
        + 2k_\s r_\s)
        - 2(kr)_\s^2
        + 8(1-kr)(kr)_\s^2}{v^2}
        -C
\\&\qquad 
        + 2\left(\frac{k_{\s\s}r
        + 2k_\s r_\s}{v(1-kr)}
        + \frac{4(k_{\s\s}r + 2k_\s r_\s)(1-kr)(kr)_\s}{v^2(1-kr)}
        + \frac{(kr)_\s^2}{v(1-kr)^2}\right)
\\&\qquad
    + r_\s\bigg( 
        \frac{3k_\s}{v(1-kr)}
        - \frac{4r_\s (k_{\s\s}r + 2k_\s r_\s)}{v^2(1-kr)}
        + \frac{k(kr)_\s}{v(1-kr)^2}
\\&\qquad 
    + \frac{2(1-kr)k(kr)_\s}{v^2(1-kr)}
    - \frac{4((kr)_\s)^2(r_\s)}{v^2(1-kr)}
    + \frac{2(kr)_\s k}{v(1-kr)^2}
    \bigg) 
\bigg]
\\&\quad
    + r_{\s\s\s}\,f_4(k,k_\s,r,r_\s)
    + f_5(k,k_\s,k_{\s\s},k_{\s\s\s},r,r_\s)
\\&=  
    (r_{\s\s})^3\,f_1(k,r,r_\s)
    + (r_{\s\s})^2\,f_2(k,k_\s,r,r_\s)
    + r_{\s\s}\,f_3(k,k_\s,k_{\s\s},r,r_\s)
\\&\quad
    + r_{\s\s\s}\,f_4(k,k_\s,r,r_\s)
    + f_5(k,k_\s,k_{\s\s},k_{\s\s\s},r,r_\s)
    \,.
\end{align*}
As $r$, $r_\s$ tend to zero, the functions $f_2$, $f_4$ and $f_5$ tend to zero, $f_3$ tends to $-C$, and $f_1$ tends to $-2$.
Due to the estimates Lemma \ref{LMrbd} and Proposition \ref{pro: rs}, this is guaranteed as $t\to\infty$, at an exponential rate ($r$ decays with rate $e^{-Ct}$, and $r_\s$ with rate $e^{-\frac12t}$).
So, for $t>T_\delta$ we have (here $c_0=c_0(r_0,\eta)$ and we have assumed $C\ge1$)
\begin{align*}
\frac12(r_{\s\s}^2)_t - \frac{(r_{\s\s}^2)_{\s\s}}{v}
& \le  
    r_{\s\s}^4\,(-2 + c_0 e^{-t})
    + |r_{\s\s}|^3\,( c_0 e^{-t})
\\&\qquad\qquad
    + r_{\s\s}^2\,(-C+c_0 e^{-\frac12 t})
    + \frac12((r_{\s\s})^2)_{\s}\,f_4(t)
    + |r_{\s\s}|c_0 e^{-t}
    \,.
\end{align*}
or
\[
\frac12(r_{\s\s}^2)_t - \frac{(r_{\s\s}^2)_{\s\s}}{v}
    - \frac12((r_{\s\s})^2)_{\s}\,f_4(t)
\le
      r_{\s\s}^4\,(-2 + 2c_0 e^{-t})
    + r_{\s\s}^2\,(-C + c_0 e^{-\frac12 t} + (c_0/2)e^{-t})
    + c_0 e^{-t}
\]
where we have estimated $|r_{\s\s}|^3 \le r_{\s\s}^4 + (1/4)r_{\s\s}^2$ and $|r_{\s\s}| \le (1/4)r_{\s\s}^2 + 1$.

Finally, by taking $t$ larger than $t_0=\max\{\log(c_0),\log(4c_0/C),\log(16c_0^2/C^2)\}$, we have
\[
\frac12(r_{\s\s}^2e^{tC/2})_t - \frac{(r_{\s\s}^2e^{tC/2})_{\s\s}}{v}
    - \frac12((r_{\s\s})^2e^{tC/2})_{\s}\,f_4(t)
\le
    c_0 e^{-t}
    \,,
\]
or
\[
\frac12(r_{\s\s}^2e^{tC/2} + 2c_0e^{-t})_t
- \frac{(r_{\s\s}^2e^{tC/2} + 2c_0e^{-t})_{\s\s}}{v}
    - \frac12((r_{\s\s})^2e^{tC/2} + 2c_0e^{-t})_{\s}\,f_4(t)
\le
    0
    \,.
\]

and so we obtain the estimate
\[
r_{\s\s}^2
\le \Big(\max r^2_{\s\s}(\cdot,t_0) + 2c_0e^{-t_0}\Big) e^{(t_0-t)C/2}
\]
as required.
\end{proof}

Let us now conduct a standard bootstrapping procedure, which we briefly summarise.
Proposition \ref{PRrssunif} implies that $r_\s \in C^{\alpha}$.
The PDE for $r_\s$ is 
\begin{equation}\label{general}
(r_\s)_t
= a(k,r,r_\s)r_{\s\s\s}
  + r_{\s\s}b(k_{\s^i},r,r_\s)
  + c(k_{\s^i},r,r_\s)r_\s
\,,
\end{equation}
where the $i$ in $k_{\s^i}$ ranges from $0$ to $2$.
Applying a theorem from Lieberman \cite[Theorem 5.14]{lieberman1996second} then yields a uniform $C^{2,\alpha}$ estimate for $r_\s$.
This then implies that $r_{\s\s\s}\in C^\alpha$.
Repeating the process shows that there is a uniform $C^\alpha$ bound for an arbitrary number $m$ of derivatives $r_{\s^m}$.
Furthermore interpolation combined with our exponential decay estimates yields the desired smooth convergence.

\begin{lem}
Let $\gamma:\S\times[0,\infty)\to\R^2$ be the target flow solution given by Proposition \ref{PRglobalwp}, and $r:\S\times[0,\infty)\to\R$ its graph function.
There exist constants
such that the following holds.
\[ 
r_{\s^m} 
\le 
    C_me^{-c_mt}
\]
for all $m\in\mathbb{N}$.
\label{LMcinfconv}
\end{lem}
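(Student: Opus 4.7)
The plan is to carry out the bootstrapping argument sketched in the paragraph preceding the lemma, and then convert the resulting uniform higher-order H\"older bounds into exponential decay of every derivative by Gagliardo--Nirenberg interpolation against the sup-norm decay of $r$ itself. For the first bootstrap step, I would verify the hypotheses of \cite[Theorem 5.14]{lieberman1996second} for equation \eqref{general} on a parabolic cylinder $\S\times[T,T+1]$ with $T$ large. By Lemma \ref{LMrbd} and Propositions \ref{pro: rs}, \ref{PRrssunif}, the quantities $r$, $r_\s$, $r_{\s\s}$ are uniformly bounded on $[T,\infty)$, so $v = (1-kr)^2+r_\s^2$ is bounded above and below away from zero; hence the coefficient $a$ is uniformly elliptic and the coefficients $b,c$ (which depend smoothly on $\eta$ and polynomially on $r,r_\s$) lie in $C^\alpha$. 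Lieberman's Schauder-type estimate then produces a uniform $C^{2+\alpha}$ bound for $r_\s$, so $r_{\s\s\s}$ is uniformly $C^\alpha$.

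Differentiating \eqref{general} in $\s$ yields a quasilinear equation for $r_{\s\s}$ with the same leading symbol and lower-order coefficients now controlled in $C^\alpha$, and a second application of \cite[Theorem 5.14]{lieberman1996second} upgrades $r_{\s\s}$ to $C^{2+\alpha}$. Iterating this procedure produces, for every $m\in\mathbb{N}$, a time-independent constant $K_m$ with $\|r_{\s^m}(\cdot,t)\|_{L^\infty}\le K_m$ uniformly for $t\ge T$. To promote these uniform bounds to exponential decay, I would invoke the one-dimensional Gagliardo--Nirenberg interpolation inequality
\[
\|r_{\s^j}\|_\infty \le C\bigl(\|r_{\s^N}\|_\infty^{j/N}\|r\|_\infty^{1-j/N}+\|r\|_\infty\bigr)
\]
valid for any $N>j$ on the circle. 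Lemma \ref{LMrbd} supplies $\|r(\cdot,t)\|_\infty \le Ke^{-Ct}$, and the bootstrap supplies $\|r_{\s^N}\|_\infty \le K_N$, so
\[
\|r_{\s^j}(\cdot,t)\|_\infty \le C_{j,N}\,e^{-C(1-j/N)t}.
\]
Taking $N$ sufficiently large (depending on $j$) yields the desired exponential decay with a rate $c_j$ as close to $C$ as we wish.

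The main obstacle is the algebraic bookkeeping in verifying that each differentiation of \eqref{general} preserves the quasilinear structure required by \cite[Theorem 5.14]{lieberman1996second}, and that the newly introduced coefficients are themselves in the H\"older class at the stage they are needed. Because $\eta$ is smooth (so $k^\eta$ and all its derivatives are bounded) and because uniform ellipticity is already built into the a-priori estimates, this step is routine but tedious; the subsequent interpolation is classical.
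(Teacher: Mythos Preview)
Your proposal is correct and follows essentially the same two-step strategy as the paper: first bootstrap via Lieberman's Schauder estimates (exactly as the paragraph before the lemma sketches) to obtain time-uniform bounds on all $r_{\s^m}$, then interpolate these against the exponential decay of $\|r\|_\infty$ from Lemma \ref{LMrbd}. The only cosmetic difference is in the interpolation step: you invoke Gagliardo--Nirenberg directly, while the paper does it by hand---using periodicity to find a zero of $r_{\s^m}$, then FTC, Cauchy--Schwarz, and $m+1$ integrations by parts to reach $\|r_{\s^m}\|_\infty \le L\,\|r\|_\infty^{1/2}\|r_{\s^{2m+2}}\|_\infty^{1/2}$, which yields decay rate $C/2$ for every $m$. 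Your version has the minor bonus that by sending $N\to\infty$ you can push the decay rate arbitrarily close to $C$, but the two arguments are equivalent in substance.
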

\begin{proof}
By periodicity of $r$ the average of $r_{\s^m} = 0$ (for $m\ge1$), so there exists a point $\s^* \in [0,L]$ such that $r_{\s^m}(\s^*) = 0$.
Then the FTC, integration by parts, the H\"older inequality, our uniform bounds and exponential decay estimate implies
\begin{align*}
||r_{\s^m}||_{L^\infty(d\s)} &\leq \int_0^L|r_{\s^{m+1}}|d\s\\
    &\leq \sqrt{L}\left( \int_0^L(r_{\s^{m+1}})^2d\s\right)^{\frac{1}{2}}\\
    & \leq \sqrt{L}\left( \int_0^L|r|\,|r_{\s^{2m+2}}|\,d\s\right)^{\frac{1}{2}}\\
    &\leq L||r(\cdot,0)||_{L^\infty(d\s)}e^{-\frac{Ct}{2}}||r_{\s^{2m+2}}||_{L^\infty(d\s)}.
\end{align*}
where we have used the Holder inequality and integration by parts. 
\end{proof}

With Lemma \ref{LMcinfconv}, we have completed the proof of Theorem \ref{TMmain}.


\printbibliography

\end{document}